\newtheorem{theorem}{Theorem}[section]
\newtheorem{lemma}[theorem]{Lemma}
\newtheorem{proposition}[theorem]{Proposition}
\theoremstyle{definition}
\newtheorem{definition}[theorem]{Definition}
\newtheorem{example}[theorem]{Example}
\theoremstyle{remark}
\theoremstyle{remark}
\newtheorem{remarks}[theorem]{Remarks}
\journal{Journal of Algebra}
\begin{document}

\begin{frontmatter}



\title{On the number of isomorphism classes of quasigroups}

  \author[label1]{P. Jim\'enez-Seral}
   \author[label2]{I. Lizasoain}
   \author[label2]{G.Ochoa}
  \address[label1]{Departamento de Matem\'aticas,\\ Facultad de Ciencias,
Universidad de Zaragoza,\\ Pedro Cerbuna 12, 50009 Zaragoza, Spain\\
paz@unizar.es}
  \address[label2]{ Departamento de Matem\'aticas,\\
Universidad P\'ublica de Navarra,\\ Campus de Arrosad\'{\i}a, 31006 Pamplona, Spain\\
ilizasoain@unavarra.es, ochoa@unavarra.es}
 
\fntext[label3]{The research of the first and the second authors is supported by  
{\em Proyecto MTM2010-19938-C03-01/03} of the {\em Ministerio de Ciencia e Innovaci\'on de Espa\~na.}}

\begin{abstract}
In this paper the number of isomorphism classes of the right transversals of the symmetric group $\Sigma_{n-1}$ in the symmetric group $\Sigma_n$ is calculated. This result together with the techniques employed along the paper may be used to go forward in the study and classification of the subgroups with a certain number of isomorphism classes of transversals in the group.
\end{abstract}

\begin{keyword}
Right quasigroups, Transversals, Permutation groups.
\end{keyword}

\end{frontmatter}


\section{Introduction}

Throughout this paper, all groups considered are finite.

The classical theory of extensions of groups has focused on aspects which are independent of the choice of transversal of the smaller group in the largest one. For instance, either the representation of the group as a permutation group of its right cosets, or the induced representation of a group from one of its subgroups, does not depend, up to equivalence, on the choice of a right transversal of the subgroup in the whole group (see for instance \cite{Luccini00} or \cite{Luccini01}). 

However, this point of view leads to some lack of accuracy in certain definitions. For this reason some recent papers have analysed the set of all right transversals of a subgroup in a group as an abstract object (\cite{JainShukla08}, \cite{JainShukla11}). Any right transversal of a subgroup in a group has the structure of a right quasigroup with identity. But what really makes this structure important is that any right quasigroup with identity of order $n$ is isomorphic to a right transversal of the symmetric group $\Sigma_{n-1}$ in $\Sigma_n$, as R. Lal showed in the key paper \cite{Lal96}.

The main aim of this paper is to establish a formula to compute the number of all isomorphism classes of right quasigroups with identity of a given order. This result is presented in Theorem \ref{main}.
\section{Preliminaries}
\begin{definition} A non-empty set $S$ endowed with an inner operation $\circ$ is said to be a {\em right quasigroup} if, for any pair $a,b\in S$, there exists a unique $x\in S$ such that $x\circ a=b$. If there exists an element $e$ such that $e\circ a=a=a\circ e$, for any $a\in S$, $S$ will be called a {\em right quasigroup with identity}. 

Given two right quasigroups with identity $S$ and $T$, a bijective map $f:S\to T$ is said to be an {\em isomorphism} if $(x\circ y)^f=x^f\circ y^f$ for any $x,y\in S$. (Here we use the same symbol $\circ$ to denote the inner operation of both $S$ and $T$). In this case we will say that $S$ and $T$ are isomorphic. 
\end{definition}

\begin{remarks}
\label{asociative}
Let $S$ be a right quasigroup with identity.
\begin{enumerate}
\item The element $e$ which appears in the definition is unique and is called the identity element of $S$.
\item If $S$ and $T$ are right quasigroups with identity, $f:S\to T$ is an isomorphism and $e$ is the identity element of $S$, then $e^f$ is the identity element of $T$.
\item Each element $a\in S$ induces a permutation of the elements of $S$ given by $x^{a}=b$ if and only if   $x\circ a=b$, for each $x\in S$.
\item Assume that the operation $\circ$ is associative. For any $a\in S$ there exists a unique element $b\in S$ such that $b\circ a=e$. Note that $(a\circ b)\circ a=a\circ (b\circ a)=a\circ e=a$. Then $a\circ b=e$. Hence, every element of $S$ possesses an inverse in $S$.

Thus, if the operation $\circ$ is associative, $S$ is a group.
\item We say that $S$ has {\em order} $n$ if the cardinal of the set $S$ is $n$.
\end{enumerate}
\end{remarks}

The introduction of right quasigroups with identity, as an algebraic structure, enables us the analysis of the right transversals of a subgroup in a group. If $H$ is a subgroup of a group $G$, $e$ denotes the identity element of $G$ and $S$ is a right transversal of $H$ in $G$ such that $e\in S$, then for any $x,y\in S$, there exists a unique $z\in S$ with $Hxy=Hz$. Therefore the inner operation $\circ$ in $S$ defined by

$$x\circ y=z \Longleftrightarrow Hxy=Hz\,\,\mbox{for any $x, y\in S$},$$
gives an structure of right quasigroup with identity to the set $S$.

In fact, this example is prototypical. 

As usual, we denote the group of all permutations of the set $\{1,2,\ldots,n\}$ by $\Sigma_n$. The subgroup of $\Sigma_n$ consisting of all permutations fixing the figure $n$ is isomorphic to $\Sigma_{n-1}$. This fact determines an embedding of $\Sigma_{n-1}$ in $\Sigma_n$ and, in what follows, we consider $\Sigma_{n-1}$ as a subgroup of $\Sigma_n$. Note that for any $x\in \Sigma_n$, the coset $\Sigma_{n-1}x$ consists of all permutations moving $n$ to $n^x$. Next we prove that any finite right quasigroup with identity is isomorphic to a right transversal of $\Sigma_{n-1}$ in $\Sigma_n$. This fact appears in \cite{Shukla95} (Theorem 3.7) but we include here a direct, and simpler, proof for the sake of completeness. 

\begin{proposition}[\cite{Shukla95} Theorem 3.7]
\label{quasigroup}
 Any right quasigroup with identity of order $n$ is isomorphic to some right transversal of $\Sigma_{n-1}$ in $\Sigma_n$.
\end{proposition}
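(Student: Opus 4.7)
The plan is to realize $S$ inside $\Sigma_n$ via its right regular representation. Fix an enumeration $S=\{s_1,\dots,s_{n-1},s_n\}$ with $s_n=e$ the identity, and identify $S$ with $\{1,2,\dots,n\}$ by $s_i\leftrightarrow i$, so that $e$ corresponds to $n$ and the embedded $\Sigma_{n-1}$ is the stabilizer of $n$. For each $a\in S$ let $\rho_a$ be the right-multiplication map from Remarks \ref{asociative}(iii), i.e.\ $x^{\rho_a}=x\circ a$ for all $x\in S$. The right quasigroup axiom (unique solvability of $x\circ a=b$) says exactly that $\rho_a$ is a bijection of $S$, hence an element of $\Sigma_n$.

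Set $T=\{\rho_a : a\in S\}$. Since $e\circ a=a$, the permutation $\rho_a$ sends $n$ (the label of $e$) to the label of $a$; in particular different $a$'s give permutations sending $n$ to different points, so the $\rho_a$ lie in $n$ distinct right cosets of $\Sigma_{n-1}=\mathrm{Stab}(n)$. As $[\Sigma_n:\Sigma_{n-1}]=n=|T|$, the set $T$ is a right transversal of $\Sigma_{n-1}$ in $\Sigma_n$. Moreover $\rho_e=\mathrm{id}$, so $T$ contains the identity and the map $\varphi:S\to T$, $a\mapsto \rho_a$, is a bijection sending the identity of $S$ to the identity of $T$.

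It remains to check that $\varphi$ preserves the quasigroup operation, where the operation on $T$ is the one induced by its being a transversal: $\rho_a\circ\rho_b=\rho_c$ iff $\Sigma_{n-1}\rho_a\rho_b=\Sigma_{n-1}\rho_c$, i.e.\ iff $n^{\rho_a\rho_b}=n^{\rho_c}$. A direct computation gives $n^{\rho_a\rho_b}=(n^{\rho_a})^{\rho_b}=a^{\rho_b}=a\circ b$, while $n^{\rho_c}=c$, so the defining relation becomes $c=a\circ b$; equivalently $\rho_a\circ\rho_b=\rho_{a\circ b}$, which is precisely $\varphi(a\circ b)=\varphi(a)\circ\varphi(b)$. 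This finishes the proof.

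The argument is essentially a verification once the correct map is chosen; the only point requiring a little care is the compatibility of the two operations in the last paragraph, where one must keep track of the difference between the intrinsic operation on $S$ and the coset-induced operation on the transversal $T$, and recognize that the identification $e\leftrightarrow n$ makes the two agree.
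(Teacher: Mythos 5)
Your proposal is correct and is essentially identical to the paper's proof: your $\rho_a$ is exactly the paper's $\sigma_i$ (defined by $j^{\sigma_i}=k$ whenever $x_j\circ x_i=x_k$), the distinct-cosets argument via $n^{\rho_a}=a$ matches, and the final compatibility check $n^{\rho_a\rho_b}=a\circ b$ is the same computation the paper uses to verify the isomorphism.
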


\begin{proof}
Let $S=\{ x_1,x_2,\ldots ,x_n=e \}$ be a right quasigroup with identity element $e$. For each $1\leq i\leq n$, consider the permutation $\sigma_i\in \Sigma_n$ given by $j^{\sigma_i}=k$ whenever $x_j\circ x_i=x_k$. Notice that, for all $1\leq i\leq n$, $\sigma_i$ is a bijective map with $n^{\sigma_i}=i$. In addition, $\sigma_n$ agrees with the identity permutation of $\{1,2,\ldots,n\}$. 

In order to see that $\{\sigma_1,\ldots,\sigma_n\}$ is a right transversal of $\Sigma_{n-1}$ in $\Sigma_n$, suppose that $\sigma_i=\rho\sigma_j$ for some $\rho\in \Sigma_{n-1}$. Then, $$i=n^{\sigma_i}=n^{\rho\sigma_j}=n^{\sigma_j}=j.$$
We conclude that $\sigma_1,\ldots,\sigma_n$ belong to different right cosets of $\Sigma_{n-1}$ in $\Sigma_n$ and hence $\{\sigma_1,\ldots,\sigma_n\}$ is a complete set of representatives of the $n$ right cosets of $\Sigma_{n-1}$ in $\Sigma_n$.

Notice that, for any $\rho, \rho' \in \Sigma_n$, $\Sigma_{n-1}\rho = \Sigma_{n-1}\rho' \Longleftrightarrow n^{\rho}= n^{\rho'}$. 
Hence, 
$$\Sigma_{n-1}\sigma_i\sigma_j=\Sigma_{n-1}\sigma_k\Longleftrightarrow n^{\sigma_i\sigma_j}=n^{\sigma_k}\Longleftrightarrow
x_i\circ x_j=x_k.$$
Consequently, the map from $S$ to the right transversal $\{\sigma_1,\ldots,\sigma_n\}$ which maps $x_i$ to $\sigma_i$ for each $1\leq i\leq n$, provides an isomorphism between both right quasigroups with identity.
\end{proof}

For any subgroup $H$ of $G$, $\mathcal{T}(G,H)$ will denote the set of all right transversals of $H$ in $G$ containing $e$ considered as right quasigroups with identity. The set of all isomorphism classes of these right quasigroups with identity will be denoted by $\mathcal{I}(G,H)$.

\begin{remarks}
\label{observa}
Let $G$ be a group and $H$ a subgroup of $G$.  Put $\vert G:H\vert=n$.
\begin{enumerate}
\item Assume that $N$ is a normal subgroup of $G$ with $N\leq H$. For any element $g\in G$, let $\overline{g}=Ng$ denote the image of $g$ under the natural epimorphism of $G$ onto $G/N$. Analogously, for any subset $X$ of $G$ we write $\overline{X}=\{\overline{x};\, x\in X\}$. For any right transversal $S\in \mathcal{T}(G,H)$, then $\overline{S}$ is a right transversal of $\overline{H}$ in $\overline{G}$. In addition, for any $x,y\in S$, if $Hxy=Hz$, with $z\in S$, then $xy=hz$ for some $h\in H$ and $\overline{H}\overline{x}\;\overline{y}=\overline{H}\overline{xy}=\overline{H}\,\overline{hz}=\overline{H}\overline{z}$. Hence $S$ and $\overline{S}$ are isomorphic quasigroups with identity. In other words, the natural epimorphism of $G$ onto $\overline{G}$ induces a bijection between $\mathcal{I}(G,H)$ and $\mathcal{I}(\overline{G},\overline{H})$.

\item  If $H$ and $K$ be subgroups of $G$ with $HK=G$, then any right transversal of $H\cap K$ in $K$ is also a right transversal of $H$ in $G$. In addition, $\mathcal{I}(K,H\cap K)\subseteq \mathcal{I}(G,H)$.

\item If $f: G_1\to G_2$ is a group isomorphism and $H_1\leq G_1$, then the restriction of $f$ to any right transversal $S$ of $H_1$ in $G_1$ provides a quasigroup isomorphism between $S$ and the right transversal $S^f$ of $H_1^f$ in $G_2$. 

\item The natural action of $G$ on the set of right cosets of $H$ in $G$ provides a group homomorphism $\varphi$ from $G$ to $\Sigma_n$. In fact, for any right transversal $\{x_1,\ldots,x_n=e\}$ of $H$ in $G$, $i^{\varphi(g)}=k$ if and only if $Hx_ig=Hx_k$. In particular,

$$i^{\varphi(x_j)}=k \Longleftrightarrow Hx_ix_j=Hx_k \Longleftrightarrow x_i\circ x_j=x_k.$$

Notice that $n^{\varphi(x_i)}=i$ for any $1\leq i\leq n$ and $n^{\varphi (h)}=n$ for every $h\in H$. Hence, $\varphi(H)\subseteq \Sigma_{n-1}$. Moreover, $\{\varphi(x_1),\ldots,\varphi(x_n)=\mbox{id}\}$ is a right transversal of $\Sigma_{n-1}$ in $\Sigma_n$.

In addition, if $\{x_1,\ldots,x_n=e\}$ and $\{y_1,\ldots,y_n=e\}$ are right transversals of $H$ in $G$, then they are isomorphic as quasigroups if and only if $\{\varphi(x_1),\ldots,\varphi(x_n)=\mbox{id}\}$ and $\{\varphi(y_1),\ldots,\varphi(y_n)=\mbox{id}\}$ are isomorphic.

\item If $\mbox{Core}_G(H)$, the largest normal subgroup of $G$ contained in $H$, is the identity subgroup, then $\varphi$ is one-to-one and we can consider $G\leq \Sigma_n$ and $H\leq \Sigma_{n-1}$. Therefore any right transversal of $H$ in $G$ is a right transversal of $\Sigma_{n-1}$ in $\Sigma_n$ in this case. 
\end{enumerate}
\end{remarks}

\section{Counting the isomorphism classes of right quasigroups with identity}
For any positive integer $n$, let $QG(n)$ denote the number of isomorphism classes of right quasigroups with identity of order $n$. In this section we present a formula to reckon $QG(n)$. By Proposition \ref{quasigroup} we have to calculate the cardinal of the set $\mathcal{I}(\Sigma_n,\Sigma_{n-1})$. Thus it is necessary to give some notation of permutation groups as it will appear in this section.

Let $t$ denote any conjugacy class of $\Sigma_{n-1}$. The cardinal of $t$ will be denoted by $a_t$ and we write $c_t=\vert C_{\Sigma_{n-1}}(\sigma)\vert$, where $\sigma$ is a representative of the elements in $t$ and $C_{\Sigma_{n-1}}(\sigma)$ denotes the centralizer of $\sigma$ in $\Sigma_{n-1}$. Recall that $a_tc_t=(n-1)!$. It is well-known that each $t$ is characterized by an $(n-1)$-tuple of non-negative integers $(r_{t,1},\ldots,r_{t,n-1})$ with $r_{t,1}+2r_{t,2} \cdots +(n-1) r_{t,n-1}=n-1$, where $r_{t,k}$ is the number of cycles of length $k$ for each $k=1,2,\ldots,n-1$, occurring in any element of $t$ written as a product of disjoint cycles (see \cite{Rotman} Theorems 3.1, 3.2 and 3.10).

Note that next Lemma \ref{isomorphic} is a particular case of a result proved in the paper of R. Lal, \cite{Lal96} Theorem 2.6. We include here a proof written in terms of permutation groups.

\begin{lemma}
\label{isomorphic}
Two right transversals of $\Sigma_{n-1}$ en $\Sigma_n$ are isomorphic (as right quasigroups with identity) if and only if they are conjugate by some $\sigma\in \Sigma_{n-1}$.
\end{lemma}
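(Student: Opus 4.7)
The approach is to translate the isomorphism condition between two right transversals into an explicit conjugation relation inside $\Sigma_n$. First I would normalize the indexings: since cosets of $\Sigma_{n-1}$ in $\Sigma_n$ are parametrized by the image of $n$, any right transversal of $\Sigma_{n-1}$ containing the identity can be written uniquely as $S=\{\sigma_1,\ldots,\sigma_n\}$ with $n^{\sigma_i}=i$; in particular $\sigma_n$, the unique element of $S$ lying in $\Sigma_{n-1}$, is the identity permutation and is the identity of the quasigroup $S$. Write $T=\{\tau_1,\ldots,\tau_n\}$ analogously. Using the equivalence $\Sigma_{n-1}\rho=\Sigma_{n-1}\rho'\Longleftrightarrow n^\rho=n^{\rho'}$ one reads off the fundamental identity
\[\sigma_i\circ\sigma_j=\sigma_{i^{\sigma_j}},\]
and the analogous formula in $T$.

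For the direction ($\Leftarrow$), suppose $T=\rho^{-1}S\rho$ for some $\rho\in\Sigma_{n-1}$. Then $f:S\to T$ given by $f(s)=\rho^{-1}s\rho$ is a bijection fixing the identity permutation. If $s\circ t=u$ in $S$, then $\Sigma_{n-1}st=\Sigma_{n-1}u$, and because $\rho\in\Sigma_{n-1}$ we have $\Sigma_{n-1}\rho^{-1}=\Sigma_{n-1}$; hence
\[\Sigma_{n-1}f(s)f(t)=\Sigma_{n-1}\rho^{-1}st\rho=\Sigma_{n-1}u\rho=\Sigma_{n-1}\rho^{-1}u\rho=\Sigma_{n-1}f(u),\]
so $f(s)\circ f(t)=f(u)=f(s\circ t)$ and $f$ is a quasigroup isomorphism.

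The main direction is ($\Rightarrow$). Given an isomorphism $f:S\to T$, define $\pi\in\Sigma_n$ by $f(\sigma_i)=\tau_{\pi(i)}$. Since any isomorphism of right quasigroups with identity sends identity to identity, $f(\sigma_n)=\tau_n$ forces $\pi(n)=n$, so $\pi\in\Sigma_{n-1}$. Substituting the fundamental formula into $f(\sigma_i\circ\sigma_j)=f(\sigma_i)\circ f(\sigma_j)$ gives
\[\pi(i^{\sigma_j})=\pi(i)^{\tau_{\pi(j)}}\quad\text{for all }i,j\in\{1,\ldots,n\}.\]
Read as an equality of permutations (using $i^\pi=\pi(i)$ and the right-action convention $i^{\alpha\beta}=(i^\alpha)^\beta$) this becomes $\sigma_j\pi=\pi\tau_{\pi(j)}$, i.e.\ $\tau_{\pi(j)}=\pi^{-1}\sigma_j\pi$. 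As $j$ runs through $\{1,\ldots,n\}$ so does $\pi(j)$, hence $T=\pi^{-1}S\pi$, exhibiting $S$ and $T$ as conjugate by the element $\pi\in\Sigma_{n-1}$.

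The only real obstacle is bookkeeping: keeping straight the normalized indexing of the transversals and the right-action convention for permutation products. Once the identity $\sigma_i\circ\sigma_j=\sigma_{i^{\sigma_j}}$ is in hand, both directions reduce to short coset manipulations.
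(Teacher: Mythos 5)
Your proof is correct and follows essentially the same route as the paper's: you extract the permutation $\pi\in\Sigma_{n-1}$ from the isomorphism via the indexing $f(\sigma_i)=\tau_{\pi(i)}$ and show $\tau_{\pi(j)}=\pi^{-1}\sigma_j\pi$ by comparing the actions on figures, exactly as the paper does with its $\sigma$. The only cosmetic difference is that you verify the converse by a direct coset computation where the paper appeals to the fact that conjugation by an element of $\Sigma_{n-1}$ is an automorphism of $\Sigma_n$ fixing $\Sigma_{n-1}$.
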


\begin{proof}
Suppose that $S=\{x_1,\cdots ,x_n=e\}$ and $T=\{y_1,\cdots ,y_n=e\}$ are right transversals of $\Sigma_{n-1}$ in $\Sigma_n$ such that, for any $1\leq i\leq n$, $n^{x_i}=n^{y_i}=i$. Note that if $\tau_i$ denotes the transposition $(i,n)$, then $\Sigma_{n-1}x_i=\Sigma_{n-1}\tau_i=\Sigma_{n-1}y_i$.

Let $f:S\to T$ be a bijective map providing the isomorphism between $S$ and $T$. Define $\sigma\in \Sigma_{n-1}$ by means of $i^{\sigma}=m$ whenever $x_i^{\,\,f}=y_m$. This means $x_i^{\,\,f}=y_{i^{\sigma}}$. For any $1\leq i,j\leq n$, take $k$ such that $x_i\circ x_j=x_k$. Then,
\begin{eqnarray*}
&& (i^{\sigma})^{x_j^{\,\,\sigma }}=(i^{\sigma})^{\sigma ^{-1} x_j\sigma }=i^{x_j\sigma }=n^{(x_i\circ x_j)\sigma}=n^{x_k\sigma}=k^{\sigma}=n^{y_{k^{\sigma}}}= n^{x_k^{\,\,f}}\\
&& = n^{(x_i^{f}\circ x_j^{f})}=n^{(y_{i^{\sigma}}\circ y_{j^{\sigma}})}= (i^{\sigma})^{y_{j^{\sigma }}},
\end{eqnarray*} 
which shows that $x_j^{\,\,\sigma }=y_{j^{\sigma }}$.

For the converse, conjugation by any $\sigma \in \Sigma_{n-1}$ is an automorphism of $\Sigma_n$ which fixes $\Sigma_{n-1}$. Then it provides a quasigroup isomorphism between any right transversal of $\Sigma_{n-1}$ in $\Sigma_n$ and its $\sigma$-conjugate.  
 \end{proof}

\begin{theorem}
\label{main}
The number of isomorphism classes of right quasigroups with identity of order $n$ is

$$QG(n)=\frac{1}{\vert \Sigma_{n-1}\vert} \sum _ta_tc_{t^1}^{\,r_{t,1}}c_{t^2}^{\,r_{t,2}}\cdots c_{t^{n-1}}^{\,r_{t,n-1}}$$
where $t$ runs over the set of all conjugacy classes of $\Sigma_{n-1}$ and, for each $t$, if $\sigma\in t$, we write $t^k$ to denote the conjugacy class of $\sigma^k$, for $k=1,\ldots,n-1$.
\end{theorem}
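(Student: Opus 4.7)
The plan is to apply Burnside's lemma to the natural conjugation action of $\Sigma_{n-1}$ on $\mathcal{T}(\Sigma_n,\Sigma_{n-1})$. By Proposition~\ref{quasigroup}, $QG(n)=|\mathcal{I}(\Sigma_n,\Sigma_{n-1})|$, and by Lemma~\ref{isomorphic} the isomorphism classes of transversals are exactly the orbits of this conjugation action. Hence, denoting by $\mbox{Fix}(\sigma)$ the set of $S\in\mathcal{T}(\Sigma_n,\Sigma_{n-1})$ with $\sigma^{-1}S\sigma=S$, Burnside's formula gives
$$QG(n)=\frac{1}{|\Sigma_{n-1}|}\sum_{\sigma\in\Sigma_{n-1}}|\mbox{Fix}(\sigma)|.$$
After grouping by conjugacy classes, the stated identity reduces to proving that, for every $\sigma$ in a class $t$, one has $|\mbox{Fix}(\sigma)|=\prod_{k=1}^{n-1}c_{t^k}^{\,r_{t,k}}$.

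To compute this, I first parametrize transversals. Using the correspondence $\Sigma_{n-1}\rho=\Sigma_{n-1}\rho'\Longleftrightarrow n^{\rho}=n^{\rho'}$ recorded in the excerpt, any $S\in\mathcal{T}(\Sigma_n,\Sigma_{n-1})$ is uniquely of the form $\{x_1,\ldots,x_{n-1},x_n=e\}$ with $n^{x_i}=i$. Since $n^{\sigma^{-1}x_i\sigma}=(n^{x_i})^{\sigma}=i^{\sigma}$, conjugation by $\sigma$ sends the element over the coset labelled $i$ to one over the coset labelled $i^{\sigma}$. Consequently $S\in\mbox{Fix}(\sigma)$ iff $\sigma^{-1}x_i\sigma=x_{i^{\sigma}}$ for every $i$.

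Next, I solve this system along each cycle of $\sigma$ on $\{1,\ldots,n-1\}$. Along a cycle $O=\{i_1,i_1^{\sigma},\ldots,i_1^{\sigma^{k-1}}\}$ of length $k$, the equations force $x_{i_1^{\sigma^{j}}}=\sigma^{-j}x_{i_1}\sigma^{j}$, and closing up around $O$ yields the single compatibility condition $x_{i_1}\in C_{\Sigma_n}(\sigma^k)$. It therefore remains to count elements $x\in C_{\Sigma_n}(\sigma^k)$ with $n^x=i_1$. The key technical step is to observe that $C_{\Sigma_n}(\sigma^k)$ acts transitively on the fixed-point set of $\sigma^k$ in $\{1,\ldots,n\}$ (both $n$ and $i_1$ lie in this set, since $\sigma^k$ is the identity on $O\cup\{n\}$), because the full symmetric group on those fixed points embeds into the centralizer. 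Orbit-stabilizer then yields exactly $|C_{\Sigma_n}(\sigma^k)\cap\Sigma_{n-1}|=|C_{\Sigma_{n-1}}(\sigma^k)|=c_{t^k}$ admissible choices for $x_{i_1}$, a number independent of the representative chosen in $O$.

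Multiplying $c_{t^k}$ over the $r_{t,k}$ cycles of length $k$, for each $k$, gives $|\mbox{Fix}(\sigma)|=\prod_{k}c_{t^k}^{\,r_{t,k}}$; summing over the $a_t$ elements of each conjugacy class $t$ of $\Sigma_{n-1}$ yields the announced formula. The subtlest point is the transitivity assertion underlying the $c_{t^k}$-count and its independence of which element of the cycle is used as representative; both are direct consequences of the explicit structure of centralizers in $\Sigma_n$.
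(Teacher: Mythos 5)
Your proposal is correct and follows essentially the same route as the paper: Burnside's lemma on the conjugation action, reduction of the fixed-point condition to $x_i^{\,\sigma}=x_{i^{\sigma}}$, cycle-by-cycle analysis yielding the compatibility condition $x_{i_1}\in C_{\Sigma_n}(\sigma^{k})$, and the count $c_{t^{k}}$ per cycle. The only cosmetic difference is in justifying that count: the paper exhibits the explicit representative $\tau_{i}=(i,n)\in C_{\Sigma_n}(\sigma^{k})$ and translates by it, whereas you invoke transitivity of the centralizer on $\operatorname{fix}(\sigma^{k})$ together with orbit--stabilizer — the same idea in slightly more abstract clothing.
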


\begin{proof}
As commented above, by Proposition \ref{quasigroup}, $QG(n)=\vert\mathcal{I}(\Sigma_n,\Sigma_{n-1})\vert$. By Lemma \ref{isomorphic}, $QG(n)$ is the number of orbits of the action by conjugation of the group $\Sigma_{n-1}$ on the set $\mathcal{T}_n=\mathcal{T}(\Sigma_n,\Sigma_{n-1})$. By a formula due to Burnside (see \cite{Biggs}, Theorem 14.4), we have

\begin{eqnarray}
QG(n)= \frac{1}{\vert \Sigma_{n-1} \vert}\sum_{\sigma\in \Sigma_{n-1}}\vert \mbox{fix}(\sigma)\vert 
\end{eqnarray}

where $\mbox{fix}(\sigma)=\{S\in \mathcal{T}_n ;\, S^{\sigma}=S\}$.

Fix $\sigma\in \Sigma_{n-1}$ and consider that $\sigma=\rho_1\ldots \rho_m$ is the decomposition of $\sigma$ as a product of disjoint cycles including the cycles of length one. 
Consider any right transversal $S=\{x_1,\ldots,x_n=e\}$ of $\Sigma_{n-1}$ in $\Sigma_n$. Assume that, for each $i=1,\ldots,n$, $x_i\in \Sigma_{n-1}\tau_i$ where $\tau_i=(i,n)$. Since $n^{x_i^{\,\,\sigma}}=n^{\sigma^{-1} x_i \sigma}=n^{x_i\sigma}=i^{\sigma}$, we have
$x_i^{\,\,\sigma}\in \Sigma_{n-1}x_{i^{\sigma}}$ for any $i=1,\ldots,n-1$. 
Therefore $S^{\sigma}=S$ if and only if $x_i^{\,\,\sigma}=x_{i^{\sigma}}$, for all $i=1,\ldots,n-1$. Note that $i$ and $i^{\sigma}$ belong to the same cycle in the decomposition of $\sigma$. This is to say that if $j$ and $i$ appear in the same cycle of the decomposition of $\sigma$ and $S^{\sigma}=S$, then the element $x_j\in S$ is determined by $x_i$ and $\sigma$. Thus, for each $l\in \{1,\ldots,m\}$ choose a figure $i(l)$ belonging to the cycle $\rho_l$. From the above argument we deduce that in order to get a right transversal $S\in \mbox{fix}(\sigma)$, we only have to give elements $x_{i(1)},\ldots, x_{i(m)}$ since the rest are determined by these elements and $\sigma$. If for each $l\in \{1,\ldots,m\}$, we write $P_{\sigma}(l)$ to denote the number of possible $x_{i(l)}$, then we have
$$\displaystyle \vert \mbox{fix}(\sigma)\vert=\prod_{l=1}^{m} P_{\sigma}(l).$$

Let $l\in \{1,\ldots,m\}$. Write $i$ for $i(l)$ and $k_l$ for the length of the cycle $\rho_l$. Note that $\sigma^{k_l}$ fixes both figures $i$ and $n$.
Now, any representative of the coset $\Sigma_{n-1}\tau_i$ is of the form $x_i=h\tau_i$ for some $h\in \Sigma_{n-1}$. If, in addition, $x_i\in S$ for some $S\in \mbox{fix}(\sigma)$, then $x_i^{\,\,\sigma ^{k_l}}=x_{i^{\sigma^{k_l}}}=x_i$ and consequently $x_i\in C_{\Sigma_n}(\sigma ^{k_l})$. But $\tau_i$ also belongs to $C_{\Sigma_n}(\sigma ^{k_l})$ because $\sigma ^{k_l}$ fixes both $i$ and $n$. Hence, 
$$h=x_i\tau_i^{-1}\in C_{\Sigma_n}(\sigma ^{k_l})\cap \Sigma_{n-1}=C_{\Sigma_{n-1}}(\sigma ^{k_l}).$$

 Conversely, if $h\in  C_{\Sigma_{n-1}}(\sigma ^{k_l})$, then 
 $$\{x_i^{\,\,\sigma^k}=h^{\sigma^k}\tau_{i^{\sigma^k}};\;\;k=0,1,\ldots,k_l-1\}$$
is a set of representatives of the right cosets $\Sigma_{n-1}\tau_{i^{\sigma^k}}$, $k=0,1,\ldots,k_l-1$, which is fixed by $\sigma$.
 
  Hence, writing $t$ to denote the conjugacy class of $\sigma$, we have
  $$P_{\sigma}(l)=\vert C_{\Sigma_{n-1}}(\sigma ^{k_l})\vert =c_{t^{k_l}}$$
  and then
  $$\vert \mbox{fix}(\sigma)\vert=\prod_{l=1}^m c_{t^{k_l}}.$$
  Note that $\vert\mbox{fix}(\sigma)\vert$ depends only on the structure of $\sigma$ as a product of disjoint cycles and two conjugate permutations have the same cycle structure. Thus in formula $(1)$ we can write 
  
$$QG(n)=\frac{1}{\vert \Sigma_{n-1} \vert}\sum_{\sigma\in \Sigma_{n-1}}\vert \mbox{fix}(\sigma)\vert =\frac{1}{\vert \Sigma_{n-1} \vert}\sum_t a_t\vert \mbox{fix}(\sigma_t)\vert$$

where $t$ runs over all conjugacy classes of $\Sigma_{n-1}$ and $\sigma_t$ is a representative of the class $t$. Now, if the members of the class $t$ are characterized by the $(n-1)$-tuple $(r_{t,1},\ldots,r_{t,n-1})$, then
$$\vert \mbox{fix}(\sigma_t)\vert=c_{t^1}^{r_{t,1}}c_{t^2}^{r_{t,2}}\cdots c_{t^{n-1}}^{r_{t,n-1}}.$$

Now, the formula becomes clear.
 \end{proof}
 
 \begin{example}
 Let us calculate $QG(6)$, the number of isomorphism classes of right quasigroups with identity of order $6$. Equivalently we want to calculate the cardinal $\vert {\mathcal I}(\Sigma_6,\Sigma_5)\vert$ of the isomorphism classes of right transversals of $\Sigma_5$ in $\Sigma_6$.
 
 Since there are seven ways to decompose $5$ as a sum of positive integers:
 
 $$1+1+1+1+1=1+1+1+2=1+2+2=1+1+3=1+4=5=2+3,$$
 there are seven conjugacy classes in $\Sigma_5$ which can be denoted as
 $$t_{11111},\,t_{1112},\,t_{122},\,t_{113},\,t_{14},\,t_{5},\,t_{23},$$
 A direct calculation gives the following table:
  
 $$\begin{array}{|c|c|c|c|c|c|} \hline 
 \mbox{class} & \mbox{represent.}  & a_t & c_t & (r_{t,ÁÁ1},ÁÁÁÁÁÁ r_{t,ÁÁ2} ,ÁÁÁÁÁÁ r_{t,ÁÁ3} , ÁÁÁÁÁÁ r_{t,ÁÁ4} ,ÁÁÁÁÁÁ  r_{t,ÁÁ5}) & (c_{t^1},ÁÁÁÁÁÁ c_{t^2} ,ÁÁÁÁÁÁ c_{t^3} , ÁÁÁÁÁÁ c_{t^4} , ÁÁÁÁÁÁ c_{t^5}) \\
 \hline
 t_{11111} & \mbox{id} & 1 & 120 & (5,0,0,0,0)& (ÁÁ120ÁÁÁ,ÁÁÁÁ120ÁÁ,ÁÁÁ120ÁÁ,ÁÁÁ120ÁÁ,ÁÁ120ÁÁ)\\
 \hline
t_{1112} & (12) & 10 & 12 & (3,1,0,0,0) & (12,120,12,120,12) \\
 \hline
 t_{122} &(12)(34) & 15 & 8 &  (1,2,0,0,0) & (8,120,8,120,8) \\
  \hline
 t_{113} & (123) & 20 & 6 & (2,0,1,0,0) & (6,6,120,6,6) \\
 \hline
t_{14} & (1234) & 30 & 4 & (1,0,0,1,0) & (4,8,4,120,4) \\
 \hline
 t_{5} & (12345) & 24 & 5 & (0,0,0,0,1) & (5,5,5,5,120) \\
 \hline
 t_{23} & (12)(345) & 20 & 6 & (0,1,1,0,0) & (6,6,12,6,6) \\
 \hline
 \end{array}$$
 
 \vspace{0,3 cm}

Hence, the number of isomorphism classes of right quasigroups with identity of order $6$ is 
\begin{eqnarray*}
&& QG(6)=\frac{1}{120} \left[120^5+ 10 (12^3 \cdot 120) + 15 (8 \cdot 120^2)+ \right.\\
&& \left. 20(6^2 \cdot 120)+30( 4 \cdot 120) +24\cdot 120+20(6 \cdot 12)\right] =207392556.
\end{eqnarray*}
 \end{example}
 
 \begin{remarks}
 \begin{enumerate}
\item Note that the above formula to calculate $QG(n)$ looks similar to an evaluation of the poynomial associated to the symmetric group $\Sigma_{n-1}$ (see \cite{Biggs}). However, appearances deceive!
\item Any group of order $n$ is a right quasigroup with identity and therefore it is isomorphic, as a right quasigroup with identity, to a right transversal of $\Sigma_{n-1}$ in $\Sigma_n$. Moreover, two groups are isomorphic if and only if they are isomorphic as right quasigroups with identity.
\end{enumerate}
 \end{remarks}
{\bf Acknowledgement:} The authors would like to thank Professor L. M. Ezquerro (Dpto. de Matem\'aticas, Universidad P\'ublica de Navarra) for his very helpful comments and suggestions in improving this paper.

 
\end{document}